\newcolumntype{C}{>{\centering\arraybackslash}X}
\newcolumntype{D}{>{\centering\arraybackslash}X}
\newtheorem{theorem}{Theorem}
\newtheorem{lemma}[theorem]{Lemma}
\newtheorem{corollary}[theorem]{Corollary}
\newtheorem*{claim*}{Claim}
\theoremstyle{remark}
\newcommand{\G}{\ensuremath{\mathcal{G}}}
\newcommand{\cA}{\ensuremath{\mathcal{A}}}
\newcommand{\B}{\ensuremath{\mathcal{B}}}
\newcommand{\cF}{\ensuremath{\mathcal{F}}}
\newcommand{\cS}{\ensuremath{\mathcal{S}}}
\newcommand{\cX}{\ensuremath{\mathcal{X}}}
\begin{document}

\title{A note on Ramsey numbers for Berge-$G$ hypergraphs}

\author{Maria Axenovich\thanks{Karlsruhe Institute of Technology, Karlsruhe, Germany}\and Andr\'as Gy\'arf\'as\thanks{Renyi  Institute of Mathematics, Budapest, Hungary}\thanks{Research supported in part by
NKFIH Grant No. K116769.}}

\maketitle

\begin{abstract}
For a graph $G=(V,E)$,  a hypergraph $H$ is called {\it Berge-$G$} if there is a bijection $\phi: E(G) \rightarrow E(H)$ such that for each $e\in E(G)$, $e \subseteq \phi(e)$.  The set of all Berge-$G$ hypergraphs is denoted $\B(G)$.

For integers $k\geq 2$, $r\geq 2$, and a graph $G$, let the Ramsey number $R_r(\B(G), k)$ be the smallest integer $n$ such that no matter how the edges of a complete $r$-uniform $n$-vertex hypergraph are colored with $k$ colors, there is a copy of a monochromatic Berge-$G$ subhypergraph. Furthermore, let $R(\B(G),k)$ be the smallest integer $n$ such that no matter how all subsets an $n$-element set are colored with $k$ colors, there is a monochromatic copy of a Berge-$G$ hypergraph.

We give an upper bound for $R_r(\B(G),k)$ in terms of graph Ramsey numbers. In particular, we prove that when $G$ becomes acyclic after removing some vertex, $R_r(\B(G),k)\le 4k|V(G)|+r-2$, in contrast with classical multicolor Ramsey numbers.

When $G$ is a triangle (or a $K_4$), we find sharper bounds and some exact results and  determine some ``small'' Ramsey numbers:

\begin{itemize}

\item $k/2 - o(k)  \leq R_3(\B(K_3)), k) \leq  3k/4+ o(k)$,

\item For any odd integer $t\ne 3$,  $R(\B(K_3),2^t-1)=t+2$,

\item  $2^{ck} \leq R_3(\B(K_4),k)\leq e(1+o(1))(k-1)k!$,

\item $R_3(\B(K_3),2)=R_3(\B(K_3),3)=5,~ R_3(\B(K_3),4)=6,~ R_3(\B(K_3),5)=7,~R_3(\B(K_3),6)=8, ~R_3(\B(K_3,8)=9, ~R_3(\B(K_4),2)=6$.
\end{itemize}
\end{abstract}

\section{Introduction}

For a graph $G$, a family  $\B(G)$  consists of hypergraphs  $H$ each with $|E(G)|$ distinct hyperedges so that
for each $xy\in E(G)$,  there is a hyperedge $e_{xy}$ of $H$ such that
 $\phi(x),\phi(y) \in  e_{xy}$ and $e_{xy}\neq e_{x'y'}$ if $xy\neq x'y'$, for an injective map $\phi: V(G)\rightarrow V(H)$.
 Here, we shall always denote the vertex set of $F$ as $V(F)$ and the edge set of $F$ as $E(F)$, for a graph or a hypergraph $F$. A copy of a graph $F$ in a graph $G$ is a subgraph of $G$ isomorphic to $F$. When clear from context, we shall drop the word ``copy" and just say that there is $F$ in $G$.

The members of $\B(G)$ are called {\it Berge-$G$ hypergraphs}.
We call  a copy $G'$ of $G$, where $G'= (\phi(V(G)), \{\phi(x)\phi(y):  xy\in E(G)\})$,  the {\it underlying  graph} of the Berge-$G$ hypergraph.

The Ramsey number $R_r(\cF, k)$ is the smallest integer $n$ such that no matter how the edges of a complete $r$-uniform $n$-vertex hypergraph, that we denote $K^r_n$,  are colored with $k$ colors, there is a monochromatic  subhypergraph from $\cF$. We always assume that $k\geq 2$, $r\geq 2$, and simply write $K_n$ for $K^2_n$. The classical $k$-color Ramsey number for a graph $G$ (uniformity $2$) is denoted $R(G, k)$, i.e., $R(G, k) = R_2(\{G\}, k)$.  When we do not restrict our attention to uniform hypergraphs, we define the Ramsey number $R(\B(G), k)$ to be the smallest integer $n$ such that no matter how all subsets an $n$-element set are colored with $k$ colors, there is a monochromatic copy of a Berge-$G$ hypergraph.
 It is convenient to define dual functions:  $f(n, \B(G))$, the smallest number of colors in a coloring of $2^{[n]}$ such that there is no monochromatic Berge-$G$ hypergraph and  $f_r(n, \B(G))$, the smallest number of colors in a coloring of $\binom{[n]}{r}$ such that there is no monochromatic Berge-$G$ hypergraph.


For $G=C_t$, the cycle on $t$ vertices,  and $r=3$, this problem has been already investigated.  It was proved by Gy\'arf\'as,  Lehel, S\'ark\"ozy, and  Schelp \cite{GYLSS} that $R_3(\B(C_t),2)=t$ for $t\ge 5$. The fact that  $R_3(\B(C_t),3)\sim {5t\over 4}$ was the main result of the paper by Gy\'arf\'as and  S\'ark\"ozy, \cite{GYS}.\\

The Ramsey problem is closely related to Tur\'an problems. For a family $\cF$ of hypergraphs,  the Tur\'an number ${\rm ex}_r(n, \cF)$ is the largest number of edges in an $r$-uniform hypergraph on $n$ vertices that does not contain any member from the family $\cF$ as a subhypergraph.  Indeed, let $N=R_r(\cF, k) -1$.  Since in a coloring of $K_N^r$ in $k$ colors with no monochromatic member of $\cF$ each color class has at most  ${\rm ex}_r(N, \cF)$ edges, we have
\begin{equation}
\binom{N}{r}  \leq k \cdot {\rm ex}_r(N, \cF).
\end{equation}\label{upper-bound-extremal}

Note that this inequality gives easy upper bounds on $N$  if ${\rm ex}_r(N, \cF) = o(N^r)$. Gy\H{o}ri \cite{G} proved that  ${\rm ex}_3(n, \B(K_3))\leq n^2/8$ and the bound is tight. The non-uniform and multi-hypergraph case was addressed in that paper too.  Other results on extremal numbers for Berge hypergraphs were provided by Gerbner and Palmer \cite{GP},
 Gerbner, Methuku, and Vizer, \cite{GMV}, Gerbner, Methuku, and Palmer, \cite{GMP},
as well as by Palmer,  Tait, Timmons, and Wagner \cite{PTTW}, and by Gr\'osz, Methuku, and Tompkins \cite{GMT}.\\

We provide bounds on  uniform and non-uniform Ramsey numbers for Berge-$K_3$ hypergraphs, including several exact results.  We also give results for Ramsey numbers of Berge hypergraphs for  general graphs.

\begin{theorem}\label{triangle} Let $\cF=\B(K_3)$. Then for $n\geq 3$
\begin{enumerate}
\item{}$k/2 - o(k)  \leq R_3(\cF), k) \leq  3k/4+ o(k)$,
\item{} $R_3(\cF,2)=R_3(\cF,3)=5,~R_3(\cF,4)=6,~R_3(\cF,5)=7,~
  R_3(\cF,6)=8, ~R_3(\cF,8)=9$,
\item{}$2^{n-2}(1-o(1))  \leq f(n, \cF)\leq 2^{n-2}$,
\item{} $f(n, \cF)= 2^{n-2}$, for odd $n\ge 3$, except for $n=5$. 
\end{enumerate}
\end{theorem}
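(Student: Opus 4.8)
Throughout write $\cF=\B(K_3)$ and recall the duality $R_3(\cF,k)=\min\{n: f_3(n,\cF)>k\}$ and $R(\cF,k)=\min\{n: f(n,\cF)>k\}$, so that each Ramsey bound is equivalent to a bound on the corresponding coloring function $f_3$ or $f$. For Part (1) the upper bound $R_3(\cF,k)\le 3k/4+o(k)$ I would read straight off the Tur\'an inequality $\binom{N}{r}\le k\cdot\mathrm{ex}_r(N,\cF)$ with $r=3$ and $N=R_3(\cF,k)-1$: Gy\H ori's bound $\mathrm{ex}_3(n,\B(K_3))\le n^2/8$ gives $\binom{N}{3}\le kN^2/8$, and $N(N-1)(N-2)/6\le kN^2/8$ solves to $N\le 3k/4+O(1)$. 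For the lower bound $R_3(\cF,k)\ge k/2-o(k)$ I must exhibit, for $n=\lfloor k/2\rfloor$, a coloring of the triples of $K_n^3$ with $k$ colors and no monochromatic Berge-$K_3$; equivalently I must decompose $\binom{[n]}{3}$ into at most $2n+o(n)$ Berge-$K_3$-free hypergraphs. The plan is to build each color class as a near-extremal Berge-$K_3$-free hypergraph and to let the classes be translates of one gadget under a cyclic action on $\mathbb{Z}_n$, verifying Berge-freeness through the characterization that a color class is Berge-$K_3$-free iff every triangle in its shadow is covered by at most two of its hyperedges. Since a single Berge-$K_3$-free hypergraph has only $O(n^2)$ edges while $\binom{n}{3}$ is cubic, the classes must be genuinely spread out (stars and ``fixed-middle'' families are far too small, contributing only $O(n)$ usable edges), and producing a decomposition whose number of parts is as small as $2n$ is the first main obstacle.

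For the finite values in Part (2) I would again use the duality together with the exact Tur\'an number $\mathrm{ex}_3(n,\B(K_3))=\lfloor n^2/8\rfloor$. The forcing (upper) direction is pure counting: if $k\lfloor n^2/8\rfloor<\binom{n}{3}$ then no $k$-coloring of $K_n^3$ can keep every class Berge-$K_3$-free, so $R_3(\cF,k)\le n$; this already gives $R_3(\cF,2)\le 5$ from $2\lfloor 25/8\rfloor=6<10$. The matching lower bounds require an explicit $k$-coloring of $K_{n-1}^3$ into Berge-$K_3$-free classes, that is, a decomposition of $\binom{[n-1]}{3}$ into $k$ extremal or near-extremal Berge-$K_3$-free hypergraphs. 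I would construct these by hand for the small vertex numbers involved ($n\le 9$), using the $\lfloor n^2/8\rfloor$ extremal configuration and its symmetries; the delicate point, signalled by the omission of $R_3(\cF,7)$ and the irregular jumps, is that the counting bound is not always achievable by a genuine partition, so some cases need an ad hoc parity or divisibility refinement rather than the clean Tur\'an estimate.

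For the non-uniform function in Part (3) the target $f(n,\cF)\sim 2^{n-2}$ cannot come from the global Tur\'an bound, since a triangle-free graph already yields a Berge-$K_3$-free family of $\sim n^2/4$ two-element sets; the argument must instead exploit the whole Boolean lattice. For the upper bound $f(n,\cF)\le 2^{n-2}$ I would design an explicit coloring of $2^{[n]}$ with $2^{n-2}$ colors in which each class contains, up to a controlled exception, at most two sets of size $\ge 2$, using that sets of size $\le 1$ never participate in a Berge-$K_3$ and that three pairwise-comparable sets sharing a large common part are exactly what creates a monochromatic covered triangle. The harder half, and the main obstacle of the whole theorem, is the lower bound $f(n,\cF)\ge 2^{n-2}(1-o(1))$: because large Berge-$K_3$-free families do exist, one must argue structurally that any coloring of $2^{[n]}$ with significantly fewer than $2^{n-2}$ colors is forced to produce a monochromatic Berge-$K_3$. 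I would attempt this by partitioning $2^{[n]}$ into $2^{n-2}$ quadruples (cosets of a two-element toggle) and showing that a coloring using too few colors must collapse many quadruples together, forcing three comparable sets with a common triangle; pushing this counting to the sharp constant $2^{n-2}$ is where the real work lies.

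Finally, Part (4) sharpens this to the equality $f(n,\cF)=2^{n-2}$ for odd $n\ge 3$, $n\ne 5$, which is equivalent, via the duality with $k=2^t-1$, to the clean statement $R(\cF,2^t-1)=t+2$ recorded in the abstract. Here I would combine the $2^{n-2}$ upper-bound coloring with a tight lower-bound argument that uses the parity of $n$ to remove the $o(1)$ slack, the oddness allowing one to pair up the lattice without a leftover layer, and I would verify the single exceptional value $n=5$ by direct inspection. The principal difficulty is uniform across the theorem: in each part the counting/forcing direction is routine once Gy\H ori's extremal numbers are in hand, whereas every matching bound demands an explicit Berge-$K_3$-free decomposition (of $\binom{[n]}{3}$ in the uniform case, of $2^{[n]}$ in the non-uniform case) realized with an essentially optimal number of parts, and it is the existence and optimality of these decompositions that forms the core of the proof.
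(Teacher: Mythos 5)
Your counting direction is correct and matches the paper: both Part 1's upper bound and all upper bounds in Part 2 come from inserting Gy\H{o}ri's bound $\mathrm{ex}_3(n,\B(K_3))\le n^2/8$ into $\binom{N}{3}\le k\cdot \mathrm{ex}_3(N,\cF)$. But everywhere a matching construction is required you only announce a plan, and that is where the actual content of the theorem lies. For Part 1's lower bound the paper's construction, which you do not supply, is: split $[n]$ into near-halves $A,B$, take optimal proper edge-colorings of the complete graphs on $A$ and on $B$, color each triple $\{x,y,z\}$ with $xy\subseteq A$, $z\in B$ by the color of the matching containing $xy$ (symmetrically for $B$), and recurse on the triples inside $A$ and inside $B$ reusing the same color set; each class is Berge-$K_3$-free (if $uv$ is a matching edge, the hyperedges of the class covering $uw$ and $vw$ coincide, so three distinct covering edges cannot exist), giving $f_3(n,\cF)\le n+f_3(\lceil n/2\rceil,\cF)\le 2n+\log n$ and hence $n\ge k/2-o(k)$. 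Your cyclic-translate gadget is never specified, so the bound is not established; likewise Part 2's lower bounds require the explicit small colorings the paper exhibits (a $4$-coloring of $K_5^3$ into classes each having a common pair, cyclic $5$- and $6$-colorings of $K_6^3$ and $K_7^3$ with fixed points $\infty$, and the general construction for $k=8$), and ``construct by hand'' is not a proof. You are right that pure counting does not settle every value (the paper omits $k=7$), but you resolve none of the cases it does settle.

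In Part 3 your upper-bound plan is numerically impossible: there are $2^n-n-1$ sets of size at least $2$, so with only $2^{n-2}$ colors the average class must contain about four such sets, and no coloring can have ``at most two sets of size $\ge 2$'' per class. The paper instead takes the quadruples $\{A,\,A\cup\{n\},\,[n-1]\setminus A,\,[n]\setminus A\}$ for $A\subseteq[n-1]$: among any three of the four sets, two lie on the same complementary side, and a set disjoint from $A$ (up to the element $n$) cannot cover any pair meeting $A$, so no class contains a Berge-$K_3$. For the lower bounds in Parts 3 and 4 the missing key lemma is that any three subsets of size at least $(n+1)/2$ (odd $n$; size at least $(n+3)/2$ in the asymptotic version) form a Berge-$K_3$, proved by choosing distinct representatives of the three pairwise intersections via Hall's condition; this caps each color at two large sets and yields $f(n,\cF)\ge 2^{n-2}(1-o(1))$ in general and exactly $2^{n-2}$ for odd $n\ne 5$. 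Your ``quadruple collapsing'' heuristic aims at the wrong structure --- the three sets of a Berge-$K_3$ are typically pairwise incomparable, as with $\{x,y\},\{y,z\},\{x,z\}$, so forcing ``three pairwise-comparable sets'' would not even produce the target configuration --- and it is not developed into an argument. Finally, $n=5$ is excepted because the Hall condition can fail there (three $3$-sets may share one common $2$-element intersection), not because equality is verified by inspection.
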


We make a connection between $r$-uniform Ramsey numbers of Berge-$G$ hypergraphs, $r\geq 3$,  and multicolor Ramsey numbers for auxiliary families of graphs.
For a graph $G=(V,E)$ and $v\in V$,  $\G^*(v)$ is defined as the class of  all graphs obtained from $G$  by the following procedure. Let $N(v)=\{q_1,\dots,q_t\}$ denote the set of vertices adjacent to $v$ in $G$. Let $G'=G-v$ be the graph obtained from $G$ by deleting $v$ and the edges of $G$ incident to $v$.  Then, for every $q_i\in N(v)$  add a {\em new edge} $q_ir_i$ (not in $G'$) where $r_i$ can be any vertex of $G'$ or any new vertex (not in $G'$).
These $|N(v)|$ new edges could be pendant, forming a matching, or could share endpoints. Thus $\G^*(v)$ includes $G$ and many other graphs. The graph obtained this way is denoted by $G''(v;q_1r_1,\dots,q_tr_t)$ and is called an {\it extension} of $G-v$.
 For example, if  $G= K_4-e$, the graph obtained from $K_4$ by deleting an edge,  and $v$ is a vertex of degree three in it, $\G^*(v)$ consists of $G$ and four other graphs. When $G=K_4$, $\G^*(v)$ consists of $G$, $K_4-e$ with a pendant edge incident to a vertex of degree $2$ of $K_4-e$, and a triangle with three pendant edges. See
 Figure \ref{G*}.

\begin{figure}[h]\label{G*}
\begin{center}
\end{center}
\caption{A family $\G^*(v)$  for $G= K_4-e$ and for $G=K_4$}
\end{figure}

\begin{theorem}\label{shadowbound}
For any graph $G$, $v\in V(G)$, and any integer $r\geq 3$,  we have $R_r(\B(G),k)\le R(\G^*(v), k)+r-2$.
\end{theorem}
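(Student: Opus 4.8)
The plan is to reduce the $r$-uniform hypergraph colouring to an ordinary edge-colouring of a complete graph by fixing a small set of ``common'' vertices and projecting. Write $N = R(\G^*(v),k)$ and set $n = N + r - 2$; it suffices to show that every $k$-colouring $c$ of $E(K^r_n)$ contains a monochromatic Berge-$G$. First I would fix an arbitrary set $S \subseteq V(K^r_n)$ with $|S| = r-2$ (this is where $r \ge 3$ is used, since we also want to single out one vertex $s \in S$), and let $W = V(K^r_n) \setminus S$, so that $|W| = N$. Project $c$ to an edge-colouring $c'$ of the complete graph $K_W$ on $W$ by setting $c'(xy) = c(S \cup \{x,y\})$ for every pair $x,y \in W$; here $S \cup \{x,y\}$ is a genuine $r$-set because $x \ne y$ and $x,y \notin S$.

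Since $|W| = N = R(\G^*(v),k)$, the colouring $c'$ contains a monochromatic copy of some member $F = G''(v; q_1 r_1, \dots, q_t r_t) \in \G^*(v)$; say it is red. By definition $F$ consists of a copy of $G-v$ together with the $t = |N(v)|$ new edges $q_1 r_1, \dots, q_t r_t$, and carries exactly $|E(G)|$ edges. I would then lift this configuration back to $K^r_n$. Define $\phi \colon V(G) \to V(K^r_n)$ by sending $V(G-v)$ to the corresponding vertices of the red copy of $F$ inside $W$, and by setting $\phi(v) = s$. As $s \in S$ is disjoint from $W$, the map $\phi$ is injective.

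It remains to exhibit, for each edge of $G$, a distinct red hyperedge covering it. For an edge $xy \in E(G-v)$, use the hyperedge $S \cup \{\phi(x),\phi(y)\}$: it contains $\phi(x),\phi(y)$ and is red because $\phi(x)\phi(y)$ is an edge of the red copy of $F$. For an edge $vq_i$ of $G$, use the hyperedge $S \cup \{\phi(q_i), r_i\}$ arising from the new edge $q_i r_i$: it is red, it contains $\phi(q_i)$, and crucially it contains $s = \phi(v)$ because $s \in S$, so it covers $vq_i$. This correspondence $E(G) \to E(K^r_n)$ is a bijection onto distinct hyperedges: a hyperedge of the form $S \cup \{a,b\}$ with $a,b \in W$ determines the pair $\{a,b\}$ as its complement in $S$, so distinct edges of the simple graph $F$ produce distinct hyperedges, and $F$ has exactly $|E(G)|$ of them. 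This yields a monochromatic Berge-$G$ and the bound follows.

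The step I expect to need the most care is this last one, namely the bookkeeping that guarantees distinct hyperedges and that every edge incident to $v$ is correctly covered. The conceptual heart is the observation that a single fixed vertex $s$ of $S$ can play the role of $v$ simultaneously for all edges $vq_i$: each new edge $q_i r_i$ of $F$ lifts to a hyperedge automatically containing $s$, which is precisely why the extensions in $\G^*(v)$ (rather than $G$ itself) are the right auxiliary family, and why no vertex beyond the $r-2$ in $S$ is required.
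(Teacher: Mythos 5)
Your proposal is correct and follows essentially the same argument as the paper: fix an $(r-2)$-set $X$, project the hypergraph colouring to an edge-colouring of the complete graph on the remaining vertices, find a monochromatic member $G''(v;q_1r_1,\dots,q_tr_t)$ of $\G^*(v)$, and lift it back with a vertex of $X$ playing the role of $v$, the new edges $q_ir_i$ supplying the distinct hyperedges covering the edges at $v$. The only cosmetic difference is that the paper packages the lift as an auxiliary graph $F$ with $E(F)=(E(G'')\cup_{i} xq_i)\setminus(\cup_i q_ir_i)$, whereas you verify the covering and the distinctness of the hyperedges directly, which is equivalent.
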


\begin{proof}
Consider a $k$-coloring $c$ of  $K=K_n^r$ on a vertex set $V$,  $|V|=n \geq  R(G^*(v), k)+r-2$.
Fix an arbitrary $(r-2)$-element subset $X\subset V$ and $k$-color the edges of the complete graph $K'$ with vertex set $V\setminus X$ by the rule $c(xy)=c(X\cup \{x,y\})$. From the choice of $n$, we have a monochromatic, say red copy  $G''=G''(v;q_1r_1,\dots,q_tr_t)$ of a member of $\G^*(v)$ in $K'$.  We claim that $K$ contains a red member of $\B(G)$.

To prove the claim, let $x\in X$ and define the graph $F$ as follows. Let $$V(F)=V(G'')\cup \{x\}, E(F)=(E(G'')\cup_{i=1}^t xq_i)\setminus (\cup_{i=1}^t q_ir_i).$$
We show that the edges of $F$ can be covered by distinct red edges of $K$. Indeed,  $xq_i\in (E(F)\setminus E(G''))$ is covered by $\{q_i,r_i\}\cup X$ and any edge $vw\in (E(F)\cap E(G''))$ is covered by $\{v,w\}\cup X$ and these sets are red edges of $K$. Thus the graph obtained from $F$ upon removing those $r_i$-s that are not in $V(G')$ (they are isolated vertices of $F$) is isomorphic to $G$ and its edges are covered by distinct red edges of $K$. This proves the claim and Theorem \ref{shadowbound}. \end{proof}

Since $G\in  \G^*(v)$ for every $v$, Theorem \ref{shadowbound} implies

\begin{corollary}\label{cor1} $R_r(\B(G),k)\le R(G,k)+r-2$.
\end{corollary}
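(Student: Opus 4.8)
The plan is to specialize Theorem~\ref{shadowbound} by exploiting the single fact that the auxiliary target family $\G^*(v)$ always contains $G$ itself. First I would recall from the definition of $\G^*(v)$ why this holds: starting from $G'=G-v$, one is free to choose each new edge $q_ir_i$ independently, and the $r_i$ may be \emph{new} vertices that coincide. Taking every $r_i$ to be one and the same new vertex---which we may simply relabel as $v$---reattaches each neighbor $q_i$ to $v$ exactly as in the original graph, so the resulting extension $G''(v;q_1v,\dots,q_tv)$ is isomorphic to $G$. Hence $G\in\G^*(v)$ for every choice of $v\in V(G)$, as already noted in the text.

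The one substantive step is the monotonicity of the multicolor Ramsey number under enlarging the target family. If $\cF'\subseteq\cF$, then any monochromatic copy of a member of $\cF'$ is in particular a monochromatic copy of a member of $\cF$; consequently it is no harder to force a monochromatic member of $\cF$ than of $\cF'$, giving $R(\cF,k)\le R(\cF',k)$. I would apply this with $\cF'=\{G\}$ and $\cF=\G^*(v)$, using $\{G\}\subseteq\G^*(v)$ from the previous paragraph, to obtain $R(\G^*(v),k)\le R(\{G\},k)=R(G,k)$.

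Chaining the two inequalities then finishes the argument: Theorem~\ref{shadowbound} gives $R_r(\B(G),k)\le R(\G^*(v),k)+r-2$, and the monotonicity bound upgrades the right-hand side to $R(G,k)+r-2$, which is the assertion of the corollary. I do not expect any genuine obstacle here: all the real work---constructing the auxiliary coloring of $K'$ and lifting a monochromatic extension back to a Berge-$G$ subhypergraph---was already done inside the proof of Theorem~\ref{shadowbound}, and the corollary merely records that enlarging the sought family from $\{G\}$ to $\G^*(v)$ can only decrease the Ramsey threshold.
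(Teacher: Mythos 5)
Your proof is correct and matches the paper's reasoning exactly: the paper derives Corollary~\ref{cor1} in one line from the observation that $G\in\G^*(v)$, which is precisely your combination of family membership (all $r_i$ chosen as a single new vertex playing the role of $v$) with the monotonicity $R(\G^*(v),k)\le R(G,k)$ applied to Theorem~\ref{shadowbound}. You have merely made explicit the two small steps the paper leaves implicit.
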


However, a much stronger bound follows from Theorem \ref{shadowbound}.

\begin{corollary}\label{cors} Set $\G=\cup_{v\in V(G)} \G^*(v)$. Then $R_r(\B(G),k)\le R(\G,k)+r-2$.
\end{corollary}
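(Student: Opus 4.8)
The plan is to reuse the reduction and the lifting construction from the proof of Theorem~\ref{shadowbound} almost verbatim, replacing the single family $\G^*(v)$ by the union $\G=\cup_{v\in V(G)}\G^*(v)$. The whole point is that the reduction from the hypergraph colouring to a graph colouring does not depend on which vertex $v$ we aim at, and that the lifting step works identically for a member of $\G^*(v)$ no matter which $v$ it comes from. Note first that $R(\G,k)$ is finite, since $G\in\G^*(v)\subseteq\G$ for any $v$, so in fact $R(\G,k)\le R(G,k)$.

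First I would set up the reduction exactly as before. Let $c$ be a $k$-colouring of $K=K_n^r$ on a vertex set $V$ with $|V|=n\ge R(\G,k)+r-2$, fix an arbitrary $(r-2)$-element set $X\subset V$, and colour the complete graph $K'$ on $V\setminus X$ by $c(xy)=c(X\cup\{x,y\})$. Since $|V\setminus X|=n-(r-2)\ge R(\G,k)$, the definition of the family Ramsey number yields a monochromatic, say red, copy $H'$ of \emph{some} member of $\G$ in $K'$.

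Next I would locate which $\G^*(v)$ this member belongs to. By the definition of $\G$ there is a vertex $v\in V(G)$ with $H'\in\G^*(v)$, so $H'=G''(v;q_1r_1,\dots,q_tr_t)$ is an extension of $G-v$. Now the lifting construction of Theorem~\ref{shadowbound} applies with this $v$: picking $x\in X$ and setting $V(F)=V(H')\cup\{x\}$ and $E(F)=(E(H')\cup_{i=1}^t xq_i)\setminus(\cup_{i=1}^t q_ir_i)$, every edge of $F$ is covered by a distinct red edge of $K$ (the edge $xq_i$ by $\{q_i,r_i\}\cup X$, and each edge $vw\in E(H')$ by $\{v,w\}\cup X$), and after deleting the resulting isolated vertices $F$ becomes a copy of $G$ covered by distinct red edges. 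Hence $K$ contains a red member of $\B(G)$, which gives the bound.

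The only thing one has to check — and this is essentially the entire content of the corollary — is that nothing in the lifting argument ever used a preselected vertex $v$: it used only the structural form $G''(v;\dots)$ of a member of $\G^*(v)$, which is available for whichever $v$ produced the monochromatic copy. I do not expect a genuine obstacle here; the single observation that makes the bound strictly better than both Corollary~\ref{cor1} and the $v$-by-$v$ application of Theorem~\ref{shadowbound} is the monotonicity $R(\G,k)\le\min_{v}R(\G^*(v),k)$, which holds simply because enlarging the target family can only make a monochromatic copy easier to force. It is worth flagging that the phrase ``follows from Theorem~\ref{shadowbound}'' is understood here as following from its \emph{proof}, not from its statement as a black box, since the statement alone only bounds $R_r(\B(G),k)$ in terms of a fixed $R(\G^*(v),k)$.
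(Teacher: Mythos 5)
Your proof is correct and is essentially the paper's intended argument: the paper derives Corollary~\ref{cors} directly from the proof of Theorem~\ref{shadowbound}, whose reduction via a fixed $(r-2)$-set $X$ and whose lifting construction never use the identity of $v$ until after the monochromatic member of $\G$ (hence of some $\G^*(v)$) is found. Your explicit remark that one must invoke the \emph{proof} rather than the statement of Theorem~\ref{shadowbound} correctly identifies the only subtlety.
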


\begin{corollary}\label{cor2}
If a graph $G$ can be made acyclic by the removal of a vertex, then  $R_r(\B(G),k)\le 4k|V(G)|+r-2$ for every $r\geq 3$.  
\end{corollary}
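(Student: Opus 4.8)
The plan is to instantiate Theorem~\ref{shadowbound} at a well-chosen vertex and then to control the resulting multicolor graph Ramsey number by locating a forest inside the auxiliary family $\G^*(v)$. Concretely, let $v\in V(G)$ be a vertex whose removal leaves $G-v$ acyclic (such $v$ exists by hypothesis), and write $t=|N_G(v)|$. Theorem~\ref{shadowbound} gives $R_r(\B(G),k)\le R(\G^*(v),k)+r-2$, so the whole task reduces to bounding $R(\G^*(v),k)$.

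The key observation is that $\G^*(v)$ contains a forest. Choosing each $r_i$ in the construction of an extension to be a distinct brand-new vertex, the graph $F_0=G''(v;q_1r_1,\dots,q_tr_t)$ is obtained from $G-v$ by attaching $t$ pendant edges, one new leaf at each neighbor $q_i$ of $v$. Since attaching pendant edges to a forest keeps it acyclic, $F_0$ is again a forest, and its order is $|V(F_0)|=|V(G-v)|+t=(|V(G)|-1)+\deg_G(v)\le 2|V(G)|-2$. Because any monochromatic copy of the single member $F_0$ is in particular a monochromatic member of the family $\G^*(v)$, we get $R(\G^*(v),k)\le R(F_0,k)$.

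It remains to bound $R(F,k)$ for a forest $F$ on $m$ vertices, and here the target is the clean estimate $R(F,k)\le 2km$; this is the crux of the argument, since everything before and after it is bookkeeping. I would prove it in two standard steps. First, in any $k$-coloring of $K_n$ some color class has at least $\binom{n}{2}/k$ edges; by the usual degeneracy argument (repeatedly deleting vertices of degree at most $m-2$ removes at most $(m-2)n$ edges in total, so it cannot exhaust a graph with more than $(m-2)n$ edges), once $n$ exceeds $2k(m-2)+1$ this color class contains a subgraph $H$ of minimum degree at least $m-1$. Second, $H$ contains every forest on $m$ vertices by a greedy embedding: order the vertices of $F$ so that each has at most one earlier neighbor (a forest is $1$-degenerate) and place them one at a time; when a vertex whose unique earlier neighbor is embedded at $w$ is to be placed, $w$ has at least $m-1$ neighbors while at most $m-2$ vertices other than $w$ have been used, so some neighbor of $w$ is still free. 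This yields $R(F,k)\le 2k(m-2)+2\le 2km$.

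Combining the three estimates, $R_r(\B(G),k)\le R(\G^*(v),k)+r-2\le R(F_0,k)+r-2\le 2k\,|V(F_0)|+r-2\le 2k(2|V(G)|-2)+r-2\le 4k|V(G)|+r-2$, which is exactly the claimed bound. The only part not already supplied by Theorem~\ref{shadowbound} is the forest Ramsey estimate of the third paragraph, so that is where I expect the real work to lie; I would state the min-degree extraction and the greedy embedding with explicit constants to be sure the factor $4$ (and not something larger) emerges, since the linear-in-$k$ behavior is the entire point of the corollary.
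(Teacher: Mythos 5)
Your proposal is correct and takes essentially the same route as the paper: choose $v$ with $G-v$ acyclic, pass to the extension in $\G^*(v)$ obtained by matching $N(v)$ to brand-new vertices (a forest on fewer than $2|V(G)|$ vertices), and combine Theorem~\ref{shadowbound} with the bound $R(F,k)\le 2k|V(F)|$ for forests $F$. The only difference is that the paper simply cites this forest Ramsey bound from \cite{GRS}, whereas you reprove it via the standard minimum-degree extraction and greedy $1$-degenerate embedding, which is a correct (and self-contained) rendering of the cited fact.
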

\begin{proof}If $G-v$ is acyclic, $G'=G-v$ has an acyclic extension $G''$ obtained by adding a matching $q_ir_i$ from $N(v)$ to new vertices. Clearly $|V(G'')|<2|V(G)|$ and since $G''$ is acyclic and $R(G'',k)\le 2k|V(G'')|$ (see e.g. \cite{GRS}), Corollary \ref{cor2} follows from Theorem \ref{shadowbound}.
\end{proof}

For the non-uniform case we have the following bounds.

\begin{theorem}\label{non-uniform}
Let $G$ be a graph with at least two edges. If $G\neq 2K_2$, then
\begin{enumerate}
\item{} $$\frac{2^{n-|V(G)|}}{|E(G)|-1}\leq f(n, \B(G))\leq 2^{n-1}.$$
\item{}  In addition, if $G$ has maximum degree at most $2$, then
 $$\frac{2^{n-1}}{|E(G)|-1}(1-o(1))\leq f(n, \B(G)).$$
\item{}  $$f(n, \B(2K_2)) = 2^n - \binom{n}{2}-n-1.$$
\end{enumerate}
\end{theorem}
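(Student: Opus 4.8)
The plan is to prove the three parts separately. For the upper bound $f(n,\B(G))\le 2^{n-1}$ in part (1), I would partition $2^{[n]}$ into its $2^{n-1}$ complementary pairs $\{S,[n]\setminus S\}$ and give each pair a private colour. A colour class then consists of two disjoint sets, hence contains fewer than three distinct hyperedges and cannot host a Berge-$G$ once $|E(G)|\ge 3$; when $|E(G)|=2$ and $G\neq 2K_2$ we have $G=P_3$, whose Berge copy needs two hyperedges sharing the image of the degree-$2$ vertex, impossible for two disjoint sets. This is precisely where $G\neq 2K_2$ enters, since a Berge-$2K_2$ needs only two disjoint sets of size $\ge 2$. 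For the lower bound I fix a $|V(G)|$-element set $W\subseteq[n]$ and restrict attention to the $2^{\,n-|V(G)|}$ sets containing $W$. In any colouring with fewer than $2^{\,n-|V(G)|}/(|E(G)|-1)$ colours the pigeonhole principle produces $|E(G)|$ distinct monochromatic sets all containing $W$; mapping $V(G)$ bijectively onto $W$ and sending the edges of $G$ to these sets yields a monochromatic Berge-$G$ (in fact a Berge-$K_{|V(G)|}$), so every Berge-$G$-free colouring uses at least that many colours.

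For part (2) I would replace the tiny core $W$ by the family $\cD$ of all sets of size greater than $n/2$, which satisfies $|\cD|=2^{n-1}(1-o(1))$ and, crucially, is pairwise intersecting. The identical pigeonhole argument then delivers the stated bound provided I establish the key lemma: any $|E(G)|$ distinct members of $\cD$ contain a Berge-$G$ when $\Delta(G)\le 2$ and $n$ is large. To build the copy I assign the edges of $G$ bijectively to the chosen sets $S_1,\dots,S_m$ and look for an injection $\phi$ with $\phi(x)$ lying in each $S_i$ whose edge contains $x$; because $\Delta(G)\le 2$, each vertex is constrained to an intersection of at most two of the $S_i$, and this is the only point where the degree hypothesis is used. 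Realising $\phi$ is exactly a system of distinct representatives for the candidate sets $C_x=\bigcap_{i:\,x\in e_i}S_i$, so the crux is Hall's condition. I expect this SDR step to be the main obstacle: two members of $\cD$ can meet in as few as two points, so Hall's condition is not automatic. I would argue that a violating subset $U$ can only consist of degree-$2$ vertices whose two incident sets are nearly disjoint off a small common part, and then combine the size constraint with the \emph{distinctness} of $S_1,\dots,S_m$ to force two of those sets to coincide, a contradiction once $n$ is large relative to $|V(G)|$. Making this quantitative, and tracking the resulting $o(1)$, is the delicate point.

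For the exact value in part (3) I would pass to the conflict graph $\Gamma$ on $2^{[n]}$ whose edges are the pairs that together form a Berge-$2K_2$, so that $f(n,\B(2K_2))=\chi(\Gamma)$. A short case analysis shows that two distinct sets form a Berge-$2K_2$ iff both have size at least $2$ and their union has size at least $4$. Hence the $n+1$ sets of size at most $1$ are isolated and may reuse any colour; each set of size at least $4$ is adjacent to every other set of size at least $2$, so the $U:=2^{n}-1-n-\binom n2-\binom n3$ such sets require $U$ private colours; and any two distinct triples conflict, forcing the $\binom n3$ triples to receive pairwise distinct colours. Finally every $2$-set lies inside some triple and conflicts with nothing in that triple's closure, so all $\binom n2$ edges can be absorbed into the triple colour classes without new colours. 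Summing gives $\chi(\Gamma)=U+\binom n3=2^{n}-\binom n2-n-1$, as claimed; the only thing needing care is that this edge-absorption never overloads a class, which holds because a triple has just three $2$-subsets.
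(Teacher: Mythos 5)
Your parts (1) and (3) are correct and essentially match the paper. For (1) you use the same complementary-pair coloring for the upper bound and the same fixed-core pigeonhole for the lower bound; you even supply the verification, which the paper leaves implicit, of why a class $\{A,[n]\setminus A\}$ cannot host a Berge-$G$ when $G\neq 2K_2$ (two disjoint sets give at most two hyperedges, and $P_3$ needs a shared vertex). For (3) your conflict-graph formulation is a harmless repackaging of the paper's argument: your characterization that two distinct sets conflict iff both have size at least $2$ and their union has size at least $4$ is right, it makes all sets of size at least $3$ pairwise conflicting (giving the lower bound $2^n-1-n-\binom{n}{2}$), and your absorption of each $2$-set into the class of a $3$-set containing it is exactly the paper's construction.

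Part (2), however, has a genuine gap, and you correctly sense where it sits. Your key lemma --- that any $|E(G)|$ distinct sets of size greater than $n/2$ contain a Berge-$G$ when $\Delta(G)\le 2$ --- is left unproved, and the repair you sketch does not suffice. Two members of your family $\cD$ can intersect in a single point when $n$ is odd, and $S_i\cap S_j=\{a\}$ with $|S_i|=|S_j|=(n+1)/2$ forces $S_j=([n]\setminus S_i)\cup\{a\}$; this ``distinctness forces coincidence'' mechanism kills a Hall violation only when the violating candidate sets share a common hyperedge (consecutive edges of a cycle, where $S_{i-1}\cap S_i=S_i\cap S_{i+1}=\{a\}$ forces $S_{i-1}=S_{i+1}$). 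But a violating pair can consist of far-apart vertices: on $C_5$ one can have $S_1\cap S_2=\{a\}=S_4\cap S_5$ with all five sets distinct, and for disconnected $G$ with $\Delta(G)\le 2$ (e.g.\ $2K_3$, which your hypotheses allow) the violating vertices may lie in different components. In those cases distinctness yields no contradiction for the chosen bijection, and one would have to exploit the freedom to permute the edge-to-set assignment --- a step your sketch does not carry out, and without which it is not even clear your lemma is true at the boundary sizes. The paper sidesteps all of this by raising the threshold: it takes $\cX=\{S:|S|\ge (n+|V(G)|)/2\}$, so that any two members intersect in at least $|V(G)|$ elements; then representatives can be picked greedily one at a time, for \emph{every} assignment of edges to sets, with no SDR analysis whatsoever. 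The counting cost is only $O\bigl(|V(G)|\binom{n}{\lceil n/2\rceil}\bigr)=o(2^n)$ discarded sets, so $|\cX|=2^{n-1}(1-o(1))$ and the stated bound follows. In short, you optimized the counting step and thereby made the embedding step hard (perhaps false as stated at the margin), whereas the paper pays a negligible counting cost to make the embedding trivial.
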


\noindent
Moreover, we have some results for Ramsey numbers of $\B(K_4)$.
Set $K_4^*=K_4^*(v)$, for a vertex $v$ in $K_4$.

\begin{theorem}\label{quadrangle} We have that for a positive constant $c$
\begin{enumerate}

\item{}$ 2^{ck}    \leq R_3(\B(K_4), k)\leq R(K_4^*, k)+1\leq e(1+o(1))(k-1)k!,$
\item{}$R_3(\B(K_4),2)=6$.
\end{enumerate}
\end{theorem}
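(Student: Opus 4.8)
The middle inequality is immediate: for $G=K_4$ we have $\G^*(v)=K_4^*$, so applying Theorem~\ref{shadowbound} with $r=3$ gives $R_3(\B(K_4),k)\le R(K_4^*,k)+1$. For the right‑hand bound I would first record a \emph{reduction}. A $k$-colouring of $K_n$ contains a monochromatic member of $K_4^*$ as soon as it contains a monochromatic triangle each of whose three vertices has degree at least $3$ in its own colour: the three ``extra'' same‑coloured edges leaving the triangle have far endpoints that may coincide or not, and the three resulting coincidence patterns (all equal, two equal, all distinct) realise exactly $K_4$, the graph $K_4-e$ with a pendant at a degree‑$2$ vertex, and the triangle with three pendant edges — i.e.\ precisely the three members of $K_4^*$. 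Hence it suffices to bound the least $n$ forcing such a monochromatic ``degree‑$3$ triangle''.

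The engine is the contrapositive structural fact: if the colouring has \emph{no} monochromatic $K_4^*$, then for every colour $i$ the vertices of colour‑$i$ degree $\ge 3$ span a triangle‑free graph in colour $i$ (a colour‑$i$ triangle on three such vertices would automatically carry a leaving edge at each vertex). I would exploit this via the maximum monochromatic degree: pick a vertex $v$ and a colour, say red, with $\deg_{\mathrm{red}}(v)\ge (n-1)/k$; among the red neighbours of $v$, any two that are both red‑big span no red edge (else $v$ plus that edge is a forbidden degree‑$3$ red triangle), so the red‑big neighbours induce a complete graph coloured with only $k-1$ colours, on which one recurses. The red‑small neighbours (colour‑red degree $\le 2$) are the obstruction; controlling them is what introduces the factor $(k-1)$. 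Equivalently, in a global form, one splits each colour's edges into ``core'' edges (both endpoints big in that colour) and the rest; the core edges form $k$ triangle‑free graphs, while non‑core edges total only $O(nk)$ because each vertex is small in a colour at most $k$ times. Extracting a large clique from the almost‑complete core graph yields a complete graph that is $k$-coloured with every colour triangle‑free, whence its order is below $R(K_3,k)\le (1+o(1))\,e\,k!$, giving $n\le (1+o(1))(k-1)R(K_3,k)$. \textbf{The main obstacle} is the bookkeeping of the low‑degree vertices: a crude clique extraction only gives a leading factor $O(k)$, and reducing it to exactly $(k-1)$ requires the tight accounting sketched above.

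For the lower bound $R_3(\B(K_4),k)\ge 2^{ck}$ I would use a doubling construction. Split $2n$ vertices as $X\sqcup Y$, colour the triples inside $X$ and inside $Y$ by the \emph{same} good $k$-colouring, and give the crossing triples a fresh palette. Reusing colours across the two sides is safe: a monochromatic Berge-$K_4$ in a reused colour cannot use a cross pair (which is coverable only by a crossing triple, and those receive new colours), so its four underlying vertices lie on a single side, contradicting the inductive colouring there. The heart of the matter — and \textbf{the main obstacle of the lower bound} — is colouring the crossing triples with only $O(1)$ additional colours so as to destroy every Berge-$K_4$ whose underlying $4$-set is split between $X$ and $Y$ (a single crossing colour manifestly fails, since all six pair‑types among a split core are then coverable). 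If one doubling step costs a bounded number of colours, iterating yields $n\ge 2^{ck}$; I would also try to import the exponential lower bound for the graph Ramsey number $R(K_4,k)$ through an Abbott‑type product, the difficulty there being the extra apex vertex carried by each Berge hyperedge.

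Finally, for $R_3(\B(K_4),2)=6$: the lower bound is immediate, since a Berge-$K_4$ requires six distinct hyperedges, whereas $K_5^3$ has only $\binom 53=10$ triples, so a balanced $2$-colouring leaves each colour with $5<6$ triples and hence no Berge-$K_4$; thus $R_3(\B(K_4),2)\ge 6$. For the upper bound, a $2$-colouring of $K_6^3$ has $\binom 63 =20$ triples, so some colour carries at least $10$; it therefore suffices to show ${\rm ex}_3(6,\B(K_4))<10$. I would argue this through the shadow graph of the majority colour together with Hall's condition, producing a system of distinct representatives assigning six distinct same‑coloured triples to the six pairs of a $K_4$ in the shadow. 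This is a short finite verification whose extremal configurations are the tripartite transversal systems (of size $8$), and I would complete it by a direct SDR/case analysis on six vertices.
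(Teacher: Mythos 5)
Your middle inequality is exactly the paper's (Theorem~\ref{shadowbound} with $r=3$), and your structural seed for the right-hand bound --- a monochromatic triangle whose three vertices all have colour-degree at least $3$ forces a monochromatic member of $K_4^*$, the three coincidence patterns of the leaving edges giving precisely the three members of $\G^*(v)$ --- is also the seed of the paper's Lemma~\ref{K*}. But from there your argument does not close. The paper does not recurse on the ``big'' neighbours of a high-degree vertex (your own remark about red-small neighbours shows why that can die: a red star makes every red neighbour red-small); instead it selects from each monochromatic triangle of colour $i$ one vertex of colour-$i$ degree two, observes that the resulting set $X_i$ is independent in colour $i$ and hence bounded by induction on $k$, and splits into two cases: some $|X_i|>n/((1+\epsilon)k)$, or all $X_i$ may be deleted, leaving a set of size at least $n\epsilon/(1+\epsilon)$ with no monochromatic triangle at all, bounded by $R(K_3,k)$; optimising $\epsilon=1/(k-1)$ gives $e(1+o(1))(k-1)k!$. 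Your core/clique-extraction fallback, as you concede, yields only a leading factor $O(k)$, and the ``tight accounting'' needed for the stated factor $(k-1)$ is asserted, not supplied. The lower bound is a larger gap: your doubling scheme stalls exactly where you say it does, and the missing idea is the paper's: a $3$-partite $3$-uniform hypergraph contains no Berge-$K_4$ (two of the four core vertices share a part, and that pair lies in no rainbow triple), so Berge-$K_4$-free colourings correspond to coverings of $K_n^3$ by $3$-partite hypergraphs, i.e.\ to perfect hash families, and the Fredman--Koml\'os bound $f(n)\le c_2\log n$ gives $2^{ck}\le R_3(\B(K_4),k)$. Even the paper's explicit doubling --- colouring crossing triples $xyz$ with $x,y\in A$ by $c_A(xy)$ for a \emph{triangle-free} pair-colouring $c_A$, which is the crossing-palette trick you were looking for --- costs $2\log_2 n$ colours per level and so only yields $2^{c\sqrt{k}}$; the full exponential bound genuinely needs the hash-family input, and no construction in your proposal delivers it.

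For $R_3(\B(K_4),2)=6$ your lower bound matches the paper's, but your upper-bound reduction rests on the claim ${\rm ex}_3(6,\B(K_4))<10$, which the paper conspicuously does not prove and which your proposal does not prove either. The paper establishes only ${\rm ex}_3(5,\B(K_4))=5$ (Lemma~\ref{exbk4}), deduces by the count $6(|E(H)|-5)\le 3|E(H)|$ that both colour classes have exactly $10$ edges, are $5$-regular, and induce exactly $5$ edges on every $5$-set, and then runs a four-case analysis on the complement graphs ($C_5$, bull, $K_4-e$, $C_4$ plus pendant), whose forcing steps invoke Berge-$K_4$-freeness of whichever colour class is convenient in each case (both classes in Case~1) and whose contradictions are degree overloads against $5$-regularity of the opposite class. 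Converting this into the one-colour Tur\'an statement you want means redoing the analysis with freeness available on one side only, for all six complement-graph types (including the cricket and the tadpole, which the paper absorbs by swapping colours); this appears to be true but is strictly more work than the paper's two-colour argument, not the ``short finite verification'' you describe, and your guess that the extremal configurations are the $8$-edge tripartite systems is unsubstantiated. As written, then, both the $2^{ck}$ lower bound and the $K_6^3$ upper bound rest on unestablished steps, while the one complete ingredient you give for the upper bound reaches only $O(k)\cdot R(K_3,k)$ rather than the stated $e(1+o(1))(k-1)k!$.
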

%

Note that part $1$ in  Theorem \ref{quadrangle} shows that the upper bound of the multicolor Ramsey number for the family $K^*_4(v)$ differs from the best known upper bound of $R(K_3,k)$ only by a factor  linear in $k$. It is also worth mentioning that part $2$. in Theorem \ref{quadrangle} shows that $R_3(\B(K_4),2)$ is much smaller than its classical counterpart, $R_3(K_4^3,2)=13$, \cite{RAD}.

The rest of the paper is structured as follows. In Section \ref{sec:non-uniform} we treat the non-uniform case proving parts $3$ and $4$ of Theorem \ref{triangle} and Theorem \ref{non-uniform}.  In Section \ref{sec:triangle} we prove the remaining parts $1$ and $2$ of Theorem \ref{triangle}.
Finally, in Section \ref{sec:quadrangle} we prove Theorem \ref{quadrangle}.




\section{The non-uniform case}\label{sec:non-uniform}

%
%
%
%

\smallskip
\noindent
{\bf Proof of Theorem \ref{non-uniform}/1 - upper bound.} If $G$ is not a $2K_2$, make each color class consisting of a set and its complement. This gives a general upper bound.

\smallskip
\noindent
{\bf Proof of Theorem \ref{triangle}/3 - upper bound.} For the upper bound on $f(n, \B(K_3))$, consider the coloring of $2^{[n]}$ such that each color class consists of four sets:   $A$,  $[n-1]-A$, $[n]-A$, and $A\cup \{n\}$ for $A\subseteq [n-1]$. Then the total number of colors is $2^{n-2}$. The four sets of each color class do not contain Berge-$K_3$.

\smallskip
\noindent
{\bf Proof of Theorem \ref{non-uniform}/1 - lower bound.} To prove the bound $f(n, \B(G))\geq \frac{2^{n-|V(G)|}}{|E(G)|-1}$, consider a set $S$ of $|V(G)|$ vertices and the set $\cS$ of  all subsets containing $S$.
Note that any $|E(G)|$ sets from $\cS$  form a Berge-$G$ hypergraph.
Thus there are at most $|E(G)|-1$ members of $\cS$ of each color.
Therefore the total number of colors is at least the number of colors used on $\cS$, that in turn is at least
$$\frac{|\cS|}{|E(G)|-1} = \frac{2^{n-|S|}}{|E(G)|-1} = \frac{2^{n- |V(G)|}}{|E(G)|-1}.$$

\smallskip
\noindent
{\bf Proof of Theorem \ref{non-uniform}/2.}
Let  $G$ be a graph with maximum degree at most $2$.
Consider $\cX$, the set of all subsets of $[n]$ of size at least ${n+|V(G)|\over 2}$. Then any two sets from $\cX$ intersect in at least $|V(G)|$ elements.
We claim that any $|E(G)|$ sets from $\cX$ form a Berge-$G$ graph.
Assume that $G$ is a cycle on $k$ vertices, for other graphs of maximum degree at most $2$, the argument is similar.
Consider an arbitrary family   $X_1, \ldots, X_k$ of sets from $\cX$.
Pick  vertices  $x_1\in X_1\cap X_2$,  $x_2\in (X_2\cap X_3) \setminus \{x_1\}$, and so on, $x_i\in (X_i\cap X_{i+1})\setminus \{x_1, \ldots, x_{i-1}\}$, $i=2, \ldots, k-1$.
Finally pick $x_k \in (X_k\cap X_1) \setminus \{x_1, \ldots, x_{k-1}\}$. Since pairwise intersections have size at least $k$, it is always possible to make such a choice of $x_1, \ldots, x_k$.
Then $\{x_1, \ldots, x_k\}$ forms an underlying  vertex set of a  Berge-$C_k$.

Thus in a coloring of $\cX$ with no monochromatic Berge-$G$, there are at most $|E(G)|-1$ sets of the same color. This implies that the number of colors is at least
$$\frac{|\cX|}{|E(G)|-1} \geq \frac{2^{n-1} - c|V(G)| \frac{2^n}{\sqrt{n}} }{|E(G)|-1} \geq   \frac{2^{n-1}}{|E(G)|-1} - o(2^n).$$
{\bf Proof of Theorem \ref{triangle}/3 - lower bound.} The lower bound for Berge-$K_3$ follows from the previous proof.

\smallskip
\noindent
{\bf Proof of Theorem \ref{triangle}/4.} When $n$ is odd, consider all sets of size at least $(n+1)/2$. We claim that any three of those, say $A_1,A_2,A_3$, form a Berge-$K_3$
hypergraph. Indeed, the three sets $A_i\cap A_j$ ($1\le i<j\le 3$) have distinct representatives by checking Hall's condition. It is obvious that none of them is empty and the union of any two of them has at least two elements. Moreover, their union has at least three elements except for $n=5$ (when three sets of size three can intersect in two elements).

Thus in any coloring of $2^{[n]}$ with no monochromatic copy of Berge-$K_3$ hypergraph, there are at most two subsets of size at least $(n+1)/2$ that have the same color. Thus the total number of colors in such a coloring is at least
$|\{A: ~ |A|\geq (n+1)/2, A \subseteq [n]\}|/2 = 2^{n-2}.$ The upper bound follows from Theorem \ref{non-uniform}/1.

\smallskip
\noindent
{\bf Proof of Theorem \ref{non-uniform}/3.} To  prove that $f(n,\B(2K_2)) = 2^n - \binom{n}{2}-n-1$,  note that in any coloring of subsets of size at least three in $[n]$ without monochromatic $\B(2K_2)$, all subsets must have distinct colors. Thus  $$f(n,\B(2K_2)) \geq |\{X: X\subseteq [n], |X|\geq 3\}| =
2^n - \binom{n}{2} -n -1.$$  On the other hand, the following coloring has no monochromatic $\B(2K_2)$:
color all sets of size at least $3$ with distinct colors, color each set of at most two elements with the color of some $3$-element set containing it. Then each color class of size at least two consists of subsets of some three element set, so it does not contain a copy of $\B(2K_2)$ and the number of colors is $2^n - \binom{n}{2} - n -1$.

\section{Ramsey number of the Berge triangle}\label{sec:triangle}
In this subsection we set $\cF=\B(K_3)$.

\smallskip
\noindent
{\bf Proof of Theorem \ref{triangle}/1,2 - upper bounds.}
%
%
Consider a coloring of $K_n^r$, an $r$-uniform $n$-clique with $k$ colors without monochromatic member of $\cF$, $n= R_r(\cF, k)-1$.
Then each color class has at most ${\rm ex}_r(n, \cF)$ edges. A result of Gy\H{o}ri \cite{G} implies that  ${\rm ex}_r(n, \cF)\leq \frac{n^2}{8(r-2)}.$
Thus $\binom{n}{r}$,   the total number of hyperedges,  is at most $ k n^2/(8(r-2))$.
This provides the general upper bounds  and all upper bounds for the Ramsey numbers for $i$ colors, $i=2,3,4,5,6,7,8$. \\

\smallskip
\noindent
{\bf Proof of Theorem \ref{triangle}/1 - lower bound.}
We shall instead provide an upper bound on the number of colors needed to color the triples on $n$ vertices so that no monochromatic Berge-$K_3$ is created.
We split the vertex set in two almost equal parts, $A$ and $B$.
Let $A_1, A_2,  \ldots$ and $B_1, B_2 \ldots$
be color classes of optimal  proper  edge-colorings of  complete graphs (uniformity $2$) on a  vertex set $A$ and on a vertex set $B$, respectively.
Let $\cA_i = \{ \{x,y,z\}:  \{x,y\} \in A_i, z\in B\}$, let $\B_i =  \{ \{x,y,z\}:  \{x,y\} \in B_i, z\in A\}$.
Then we see that each of $\cA_i$'s and $\B_i$'s does not contain a member or $\cF$. Moreover these classes of triples contain all hyperedges of $K_n^3$ with vertices in both $A$ and $B$. Color all triples in $\cA_i$ with color $a_i$, all tripes in $\B_i$ with color $b_i$, for distinct $a_i$'s and $b_i$'s.
Further, color the triples with all elements in $A$ or with all elements in $B$ recursively using new colors such that the set of colors used on triples from $A$ is the same as the set of colors used on triples from $B$.
Let $f(n)$ be the number of colors given by this construction and $\chi'(G)$ denote the chromatic index of a graph $G$.
Then
\begin{eqnarray*}
f(n) & \leq & \chi'(K_{\lfloor n/2 \rfloor}) + \chi'(K_{\lceil n/2 \rceil}) + f(\lceil n/2 \rceil)\\
&\leq & \lfloor n/2 \rfloor+ \lceil n/2 \rceil+ f(\lceil n/2 \rceil)\\
&= & n+ f(\lceil n/2 \rceil)\\
&\leq & 2n + \log n.
\end{eqnarray*}

So, we have that the number of colors $k$ is bounded as $k\leq 2n + \log n$,
thus $n\geq k/2 - o(k)$.

\smallskip
\noindent
{\bf Proof of  Theorem \ref{triangle}/2 - lower bounds.}
The lower bound for $2$ and $3$ colors is obvious since  two edges of $K_4^3$ can be colored red and  the other two  blue. An
$\cF$-free $4$-coloring of $K_5^3$ on $[5]$ can be  given by splitting the edge set into color classes as follows:
$$123,124,125; ~134,234,345;~135,145; ~235,245.$$
Note that for each color class there is a pair of vertices that belongs to each hyperedge of this class, thus there is no monochromatic  Berge-$K_3$ hypergraph.
An $\cF$-free $5$-coloring of $K_6^3$ on $[5]\cup \infty$ can be  given so that the first color class is $$\infty12,\infty13,345,245$$ and each other color class is obtained from the first one by cyclically shifting the vertex labels that are not $\infty$  and keeping $\infty$ fixed.  Finally, an $\cF$-free $6$-coloring of $K_7^3$ on $[5]\cup \infty_1 \cup \infty_2$ can given so that the first color class consists of $5$ edges:
$\infty_1\infty_21$,  $\infty_1\infty_2 2$, $\infty_1\infty_23$, $\infty_1\infty_2 4$, $\infty_1\infty_2 5$.
The second color class is  $$125,134,\infty_125,\infty_225,\infty_134,\infty_234,$$  and third  through sixth color classes are obtained from the second by  keeping $\infty_1,\infty_2$ fixed and cyclically shifting other vertex labels.
The lower bound for $8$ colors comes from the general construction.

\section{Ramsey results for Berge-$K_4$}\label{sec:quadrangle}

In this section, set $\cF=\B(K_4)$.

\smallskip
\noindent
{\bf Proof of Theorem \ref{quadrangle}/1 - lower bound.}
A natural lower bound on $R_3(\cF,k)$ comes from covering the edges of $K_n^3$ with the smallest possible number of $3$-partite subhypergraphs. Indeed, a $3$-partite $3$-uniform hypergraph cannot contain any member of $\cF$, thus $f(n)$, the minimum number of $3$-partite hypergraphs needed to cover all edges of $K_n^3$ provides a coloring with $f(n)$ colors containing no monochromatic member from $\cF$.  This is a well studied problem in coding theory, a special perfect hash family.  Apart from very small $n$, exact values of $f(n)$ are not known, only upper bound tables are available \cite{CW}. The order of magnitude of $f(n)$ is known, $c_1\log(n)\le f(n)\le c_2\log(n)$ \cite{FK}. The upper bound implies
that  $2^{ck} \le R_3(\cF,k)$ for a positive constant $c$.\\

Interestingly, the upper bound of $f(n)$ is easy from probabilistic constructions, however, simple explicit constructions are not known (for general $n$). It seems worthwhile to give a very simple construction leading to a $2\log_2^2(n)$ upper bound.
%
Split the vertex set of $K_n^3$ into two almost equal parts $A$ and $B$,
  consider colorings $c_A,c_B$ on the pairs of $A,B$ with no monochromatic $K_{3}$ and with disjoint color sets. Extend this coloring to edges of $K_n^3$ intersecting both $A$ and $B$ as follows: edges $xyz$ with $x,y\in A,z\in B$ are colored with $c_A(xy)$, edges $xyz$ with $x\in A,y,z\in B$ are colored with $c_B(yz)$.  This can be easily done by using  no more than $2\log_2(n)$ colors. The uncolored edges, i.e. edges inside $A$ and inside $B$ can be colored recursively, using the same set of new colors. This leads to the recursive bound $f(n)\le 2\log_2(n)+f(\lceil n/2 \rceil)$.\\

\smallskip
\noindent
{\bf Proof of Theorem \ref{quadrangle}/1 - upper bound.}
The inequality ~ $R_3(\cF, k)\leq   R(K_4^*(v), k)$ follows from Theorem \ref{shadowbound}. Thus Theorem \ref{quadrangle} follows from the following lemma.

\begin{lemma}\label{K*} For any  $\epsilon$,  $0<\epsilon<1/4$, and any $k\geq 1$,
$R(K_4^*(v), k)\leq (1+\epsilon)^k \epsilon^{-1}  k!$.
In particular,   $R(K_4^*(v), k)\leq (k-1)e(1+o(1))k!. $
\end{lemma}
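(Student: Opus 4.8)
The plan is to turn membership in $K_4^*(v)$ into a statement about monochromatic triangles whose vertices are not ``color-starved,'' and then to prove the clean recursion $R_k\le (1+\epsilon)kR_{k-1}$ by a global double-counting argument over all possible apex vertices. First I would record the reformulation: each of the three graphs comprising $\G^*(v)$ for $G=K_4$ is a triangle together with three further edges, one incident to each triangle vertex and leaving the triangle (the far endpoints may coincide or be new). Hence a $k$-coloring of $K_n$ contains a monochromatic member of $K_4^*(v)$ in color $c$ if and only if there is a triangle $xyz$ in color $c$ such that each of $x,y,z$ has an incident color-$c$ edge going outside $\{x,y,z\}$; equivalently, each of $x,y,z$ has color-$c$ degree at least $3$. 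I would state and check this equivalence up front, as it converts ``find a family member'' into ``find a monochromatic triangle whose vertices escape.''

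Second, set $R_k:=R(K_4^*(v),k)$ and target $R_k\le (1+\epsilon)kR_{k-1}+1$. Fix a coloring of $K_n$ with $n=(1+\epsilon)kR_{k-1}+1$ and suppose, for contradiction, there is no monochromatic member. For each vertex $w$ pick a majority color $c_w$, so its color-$c_w$ neighborhood $U_w$ has $|U_w|\ge (n-1)/k\ge (1+\epsilon)R_{k-1}$. Let $P_c$ be the set of vertices of color-$c$ degree at most $2$. By the reformulation, a color-$c_w$ edge inside $U_w\setminus P_{c_w}$ (both endpoints of color-$c_w$ degree $\ge 3$) would, together with $w$, yield a monochromatic member; so $U_w\setminus P_{c_w}$ spans no color-$c_w$ edge, and $K[U_w\setminus P_{c_w}]$ uses only $k-1$ colors. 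Induction on $k$ then forces $|U_w\setminus P_{c_w}|<R_{k-1}$, whence
\[
|U_w\cap P_{c_w}| \;>\; (1+\epsilon)R_{k-1}-R_{k-1} \;=\; \epsilon R_{k-1}
\]
for every vertex $w$.

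Third — and this is the step that avoids the factor-$2$ loss of a naive ``take a color-independent set in $U_w$'' argument — I would double count the pairs $(w,z)$ with $z\in U_w\cap P_{c_w}$. Summing the displayed inequality over all $w$ produces more than $n\,\epsilon R_{k-1}$ such pairs. Conversely, a fixed $z$ can serve for $w$ only when the edge $wz$ has color $c_w$ and $\deg_{c_w}(z)\le 2$; for each color $c$ there are at most $\deg_c(z)\le 2$ admissible $w$, so $z$ is counted at most $2k$ times. Thus $n\,\epsilon R_{k-1}<2kn$, i.e.\ $R_{k-1}<2k/\epsilon$. Since every member of the family contains a triangle, $R_{k-1}\ge R(K_3,k-1)$, which grows exponentially in $k$; hence $R_{k-1}\ge 2k/\epsilon$ for all $k$ past a threshold depending only on $\epsilon$, and the contradiction establishes the recursion in that range. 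I expect this averaging over \emph{all} apexes, rather than fighting a single worst-case (matching-like) neighborhood, to be the crux: a single apex only yields the far weaker $R_k\le 2kR_{k-1}$, hence $2^k k!$.

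Finally, I would close the induction. The base case $R_1=4\le (1+\epsilon)\epsilon^{-1}$ holds because $\epsilon<1/4<1/3$, and the step $R_k\le (1+\epsilon)kR_{k-1}$ multiplies through to $R_k\le (1+\epsilon)^k\epsilon^{-1}k!$; the finitely many small $k$ below the threshold are absorbed using the crude bound $R_k\le 2kR_{k-1}+1$ (valid since the color-$c_w$ graph on $U_w$ is a star forest and thus has a color-independent set of size $\ge |U_w|/2$) against the slack in $\epsilon^{-1}k!$. The ``in particular'' statement then follows by taking $\epsilon=1/(k-1)$, since $(1+\epsilon)^k\to e$ and $\epsilon^{-1}=k-1$.
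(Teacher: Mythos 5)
Your route is genuinely different from the paper's. The paper argues globally: for each color $i$ it selects one vertex of color-$i$ degree two from \emph{every} monochromatic $i$-triangle, notes that the union $X_i$ of these representatives spans no $i$-edge (so $|X_i|<R(K_4^*(v),k-1)$ by induction), and then splits into two cases --- either some $|X_i|>n/((1+\epsilon)k)$, which bounds $n$ directly, or deleting all the $X_i$ leaves a set of size at least $n\epsilon/(1+\epsilon)$ containing no monochromatic triangle at all, which bounds $n$ via $R(K_3,k)$. You instead work inside majority-color neighborhoods and average over all apexes. Your preliminary reformulation (a monochromatic member of $K_4^*(v)$ in color $c$ exists iff there is a $c$-triangle all of whose vertices have $c$-degree at least $3$) is correct and is also the paper's implicit starting point, and your steps through the double count are sound: $|U_w\setminus P_{c_w}|\le R_{k-1}-1$ is immediate from the definition of $R_{k-1}$ (no induction needed), and the bound of at most $2$ admissible apexes per color, hence $2k$ per vertex $z$, is right. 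The two approaches trade differently: the paper uses $\epsilon$ to interpolate between a per-step factor $(1+\epsilon)k$ and a terminal appeal to $R(K_3,k)$, while your averaging extracts the $(1+\epsilon)k$ factor with no terminal case.

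The genuine gap is the absorption of small $k$. Your recursion $R_k\le(1+\epsilon)kR_{k-1}+1$ is only derived under the hypothesis $R_{k-1}\ge 2k/\epsilon$, and for $\epsilon$ near $1/4$ this hypothesis is unavailable at small $k$: e.g.\ at $k=4$ you would need $R_3\ge 32$, whereas the only lower bound you can cite is $R_3\ge R(K_3,3)=17$. Meanwhile the crude chain $R_k\le 2kR_{k-1}+1$ does \emph{not} stay under the target there: even granting the true value $R_2=7$ (Lemma \ref{rk3plus}), it gives $R_3\le 43$ and $R_4\le 345$, exceeding $(5/4)^4\cdot 4\cdot 4!\approx 234$. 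The structural reason is that for fixed $\epsilon$ the slack $\epsilon^{-1}$ is a constant, while the crude recursion loses a factor of roughly $2/(1+\epsilon)$ per step over the whole pre-threshold range; so ``absorbing the finitely many small $k$'' cannot work uniformly in $\epsilon\in(0,1/4)$. Your scheme does prove the asymptotic ``in particular'' clause, where $\epsilon=1/(k-1)\to 0$ and the threshold is $O(\log k)$, but not the lemma as stated for every $k\ge 1$ and every $0<\epsilon<1/4$.

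Fortunately your own double count repairs this if you drop the contradiction framing: in any member-free $k$-coloring of $K_n$ with $n\ge 3k+1$, every vertex $w$ satisfies $|U_w\cap P_{c_w}|\ge (n-1)/k-R_{k-1}+1$, and comparing with the $2k$ bound per $z$ gives the \emph{unconditional} recursion $R_k\le kR_{k-1}+2k^2-k+2$. Iterating (dividing by $k!$ and summing the $O(k^2)/k!$ terms, anchored at $R_2=7$) yields $R_k\le Ck!$ for an absolute constant $C<9$, which implies the lemma for all $k\ge 6$ since $\min_{0<\epsilon<1/4}(1+\epsilon)^k\epsilon^{-1}\ge e(k-1)$ there; for $k\le 5$ the quantity $(1+\epsilon)^k\epsilon^{-1}$ is decreasing on $(0,1/4)$, so it suffices to check the recursion's values $7,38,182,957$ against $(5/4)^k\cdot 4\cdot k!$, which holds. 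Note this repaired version is asymptotically \emph{stronger} than both your plan and the paper's bound $e(1+o(1))(k-1)k!$, removing the factor of $k-1$.
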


\begin{proof}
We shall prove the statement by induction on $k$ with a trivial basis for $k=1$.
Consider a coloring of $E(K_n)$,  with $k$ colors and no monochromatic copy of a member from $K_4^*(v)$. Note that from each monochromatic triangle $T$ of a fixed color class $i$ we can select {\em one}  vertex $v_i(T)$ of degree two in color class $i$. Let $X_i=\cup v_i(T)$, where the union is taken  over all monochromatic triangles of color $i$.
Then $X_i$  is an independent set in color $i$, i.e. cannot contain any edge of color $i$.  By induction
$|X_i| \leq (1+\epsilon)^{k-1} \epsilon^{-1} (k-1)!.$\\

{\bf Case 1.} $|X_i|> n/((1+\epsilon)k)$ for some $i$.   \\
Then $n\leq (1+\epsilon) k |X_i|\leq  (1+\epsilon)k  (1+\epsilon)^{k-1} \epsilon^{-1} (k-1)! = (1+\epsilon)^k \epsilon^{-1} k!$ by induction.\\

{\bf Case 2.}  $|X_i|\leq n/ ((1+\epsilon) k)$ for each $i$.
Note that deleting all $X_i$'s leaves vertex set $X'$ of size at least $n- n/(1+\epsilon) = n \epsilon /( 1+ \epsilon) $ such that $X'$ does not contain any monochromatic triangles. Then $n\leq (1+\epsilon) |X'| /\epsilon \leq (1+ \epsilon) R(K_3, k) /\epsilon \leq k! (1+\epsilon)/\epsilon \leq (1+\epsilon)^k \epsilon^{-1} k!$.\\

Optimizing over $\epsilon$, for large $k$ we see that for $\epsilon = 1/(k-1)$, $n \leq (k-1)e(1+o(1))k!.$
\end{proof}

\smallskip
\noindent
{\bf Proof of Theorem \ref{quadrangle}/2.}  Note that Theorem \ref{shadowbound} gives
\begin{equation}\label{notenough}
R_3(\cF,2)\le R(K_4^*(v),2)+1
\end{equation}
and the Ramsey number on the right can be determined easily.

\begin{lemma}\label{rk3plus}  $R(K_4^*(v),2)=7$.
\end{lemma}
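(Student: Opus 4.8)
To prove Lemma~\ref{rk3plus}, I need to establish both $R(K_4^*(v),2)\ge 7$ and $R(K_4^*(v),2)\le 7$. Recall that $K_4^*(v)=\G^*(v)$ for $v$ a vertex of $K_4$ consists of three graphs: $K_4$ itself, a $K_4-e$ with a pendant edge at a degree-$2$ vertex, and a triangle with three pendant edges (one at each vertex). So a $2$-coloring of $E(K_n)$ avoids a monochromatic member of $K_4^*(v)$ precisely when each color class contains none of these three graphs.

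For the lower bound $R(K_4^*(v),2)\ge 7$, the plan is to exhibit a $2$-coloring of $E(K_6)$ with no monochromatic member of $K_4^*(v)$. The natural candidate is the unique (up to swapping colors) $2$-coloring of $K_6$ whose color classes are two disjoint copies of $K_{3,3}$, or alternatively two triangle-factors; I would check each such coloring directly. The key point to verify is that neither color class contains any of the three forbidden graphs. Since $K_{3,3}$ is triangle-free, it cannot contain a triangle with three pendant edges, and it cannot contain $K_4$ or $K_4-e$ (both of which contain triangles); hence the $K_{3,3}\cup K_{3,3}$ coloring works immediately. First I would confirm that $K_{3,3}$ is bipartite and therefore triangle-free, and observe that every graph in $K_4^*(v)$ contains a triangle, so a bipartite color class is automatically safe.

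For the upper bound $R(K_4^*(v),2)\le 7$, I must show that every $2$-coloring of $E(K_7)$ yields a monochromatic member of $K_4^*(v)$. Since $R(K_3,2)=6\le 7$, any $2$-coloring of $K_7$ contains a monochromatic triangle, say a red triangle $T$ on vertices $\{a,b,c\}$. The strategy is then to use the remaining $4$ vertices to force one of the three configurations in red, or else deduce a monochromatic member of the family in blue. For each outside vertex $w$, look at the three edges $wa,wb,wc$: if any one is red, then $T$ together with that red edge already gives a triangle with a pendant red edge, which I can try to extend. The cleanest route is to argue that if enough edges from the outside vertices to $T$ are red, we build the triangle-with-three-pendant-edges or a red $K_4-e$ with pendant; and if instead the edges to $T$ are predominantly blue, the four outside vertices together with their blue connections to $T$ force a blue member of the family.

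The main obstacle will be the careful case analysis in the upper bound: I expect to track, for each of the four vertices outside the monochromatic triangle $T$, the color pattern of its three edges to $T$, and to combine this with the coloring among the four outside vertices. The delicate part is ensuring that in every branch I can realize one of the three specific graphs in $K_4^*(v)$ (not just ``many red edges'') while respecting the requirement that pendant edges attach at the correct degree vertices; in particular I must distinguish when I get a genuine red $K_4$, when I only get $K_4-e$ plus a pendant, and when I fall back on the triangle-with-three-pendants configuration. I would organize the argument by the number of outside vertices sending at least two red edges into $T$, handling the abundant-red case to produce a red $K_4$ or $K_4-e$-with-pendant, and the sparse-red case to force a blue structure, so that no coloring of $K_7$ escapes.
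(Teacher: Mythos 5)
Your lower bound contains a genuine error. There is no $2$-coloring of $E(K_6)$ whose color classes are ``two disjoint copies of $K_{3,3}$'': two copies of $K_{3,3}$ have $9+9=18$ edges while $K_6$ has only $15$ (similarly, two triangle-factors have only $12$ edges and do not cover $K_6$). More fundamentally, your ``immediate'' verification --- both classes bipartite, every member of $K_4^*(v)$ contains a triangle, done --- cannot be repaired, because $R(K_3,2)=6$ forces \emph{every} $2$-coloring of $K_6$ to contain a monochromatic triangle; at least one color class is never triangle-free. The correct construction (the one the paper uses) is $K_{3,3}$ together with its complement in $K_6$, which is $2K_3$, two vertex-disjoint triangles. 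The $K_{3,3}$ side is safe by your triangle-freeness argument, but the $2K_3$ side \emph{does} contain triangles and needs a separate check: its triangles are entire connected components, so no pendant edge can be attached to them, and hence none of $K_4$, $K_4-e$ plus pendant, or the triangle with three pendant edges embeds in it. This check is exactly what your argument skips, and it is where the construction actually lives.

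For the upper bound you offer only a plan, not a proof: the case analysis organized by how many outside vertices send at least two red edges into a single monochromatic triangle $T$ is never carried out, and it is not clear it closes. The paper's argument rests on two ideas your sketch lacks. First, it invokes Goodman's theorem to get at least four monochromatic triangles in any $2$-colored $K_7$, hence \emph{two triangles of the same color} $T_1,T_2$; a single triangle from $R(K_3,2)=6$ gives much weaker leverage. Second, it exploits the structural consequence of assuming no monochromatic member of the family: each red triangle must contain a vertex of red degree exactly two in all of $K_7$ (otherwise a red triangle with an incident red edge begins to extend to a member of $K_4^*(v)$), so $v_1\in T_1$ and $v_2\in T_2$ have \emph{all} their other edges blue, which immediately seeds blue structure on $\{v_1,v_2\}\cup S$ and drives a short case analysis on the sets $T=(T_1\cup T_2)\setminus\{v_1,v_2\}$ and $S=V(K_7)\setminus(T_1\cup T_2)$. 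Without the second triangle and without extracting this forced low-red-degree vertex, your ``abundant-red versus sparse-red'' dichotomy leaves all the real work undone, including the delicate attachment conditions you yourself flag (pendant edges at the correct degree-$2$ vertices). As it stands, neither direction of the lemma is established.
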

\begin{proof}
Set $\cF=\B(K_4^*(v))$. For the lower bound $R(\cF,2)\ge 7$, consider $K_{3,3}$ and its complement as a $2$-coloring on $K_6$.

For the upper bound, suppose  $K=K_7$ is colored with red and blue.  A well-known result of Goodman \cite{GO} implies that any $2$-colored $K_7$ contains at least four monochromatic triangles, among them two of the same color, say $T_1,T_2$ are vertex sets of  red triangles.

Suppose for contradiction that we have no monochromatic member of $\cF$.  This implies that there exist $v_1\in T_1,v_2\in T_2$ with red degree two in $K$. Consequently the edge $v_1v_2$ and all edges incident to $v_1,v_2$ and not on $T_1,T_2$ are blue. Set
$$T=(T_1\cup T_2)\setminus \{v_1,v_2\}, ~S=V(K)\setminus (T_1\cup T_2).$$ Easy inspection shows that either there is a blue member of $\cF$ with base triangle $v_1,v_2,s$ with $s\in S$ or all edges of $[S,T]$ are red. In the latter case any red triangle with a red edge in $T^*$ and with any $s\in S$ is a base triangle of a red member of $\cF$, apart from one case: when $|S|=3, |T|=2$. In this exceptional case a red edge $s_1s_2$  gives a red member of $\cF$ (with base triangle $s_1\cup T$) and a blue edge $s_1s_2$ gives a blue member of $\cF$ (with base triangle $s_1s_2v_1$).
\end{proof}

Unfortunately,  Lemma \ref{rk3plus} implies through (\ref{notenough}) only $R_3(\cF,2)\le 8$, to decrease it by two seems to require more difficult argument.
First we need the Tur\'an number of $\cF$ for $n=5$.


\begin{lemma}\label{exbk4} ${\rm ex}_3(5, \cF)=5$.
\end{lemma}
\begin{proof} Five edges clearly do not form $\cF$ thus we have to show ${\rm ex}_3(5,\cF)< 6$. Assume $H$ is a $3$-uniform hypergraph with six edges on a vertex set $[5]$ without any member of $\cF$. Observe that the maximum vertex degree of $H$ is at least $\lceil {6\times 3\over 5}\rceil=4$.
\begin{itemize}
\item Suppose that for some $1\le i<j\le 5$, $ij$ is not covered by any edge of $H$. By symmetry, let $i=1,j=2$. Then $H$ either contains the six edges meeting $\{1,2\}$ in one vertex or one of them, say $234$ is missing. In the first case the assignment $$e_{13}=134,e_{14}=145,e_{15}=135,e_{34}=234,e_{35}=235,e_{45}=245$$ defines a $\cF$, otherwise the assignment $e_{34}=234$ is replaced by the assignment $e_{34}=345$ to get a $\cF$. In both cases we have a contradiction. Thus all pairs of vertices are covered by some edge of $H$. Assume that vertex $1$ has maximum degree in $H$.

\item   If $d(1)=6$ then the edges containing $1$ can be obviously assigned to pairs of $\{2,3,4,5\}$ to form a member of $\cF$. Similarly, if $d(1)=5$ then the five edges containing $1$ with the edge covering the  yet uncovered pair of the link of $1$ form a member of $\cF$ on $\{2,3,4,5\}$. Both cases yield contradiction.

\item  If $d(1)=4$ then the link of $1$ is either the four cycle $2,3,4,5,2$ or the graph with edges $23,34,24,45$. In the first case we have a member of $\cF$ on $\{2,3,4,5\}$ with the assignment $$e_{23}=123,e_{34}=134,e_{45}=145,e_{25}=125$$
    extended by the edges covering the uncovered pairs $24,35$. In the second case we make the assignments $$e_{23}=123,e_{24}=124,e_{34}=134,e_{45}=145.$$ If there exist two distinct edges of $H$ to cover the yet uncovered pairs $25,35$ we get a member of $\cF$ on vertex set $\{2,3,4,5\}$.  Otherwise $234,235$ are both edges of $H$ and we have a member of $\cF$ on $\{1,2,3,4\}$ by the assignments $$e_{12}=123,e_{13}=134,e_{14}=145,e_{23}=235,e_{24}=124,e_{34}=234,$$
    a contradiction, finishing the proof. \end{itemize}
\end{proof}


We are ready to prove that $R_3(\cF, 2)=6$.   It is easy to see that $R_3(\cF,2)\ge 6$ since in any $2$-coloring of $K_5^3$ with five edges in each  color class there is no monochromatic member of $\cF$.

For the upper bound consider a $2$-coloring $c$ of the edges of $K=K_6^3$ with no monochromatic member of $\cF$.  Let $H$ be the hypergraph defined by the edges of the majority color, containing at least $10$ hyperedges. By Lemma \ref{exbk4}, any $5$ vertices of $H$ induce at most $5$ hyperedges, thus the remaining at least $|E(H)|-5$
hyperedges contain the sixth vertex, i.e., $d(v)\ge |E(H)|-5$ for every $v\in V(H)$ implying
\begin{equation} \label{upper-bound}
6(|E(H)|-5)\le \sum_{v\in v(H)} d(v)=3|E(H)|,
\end{equation}
that in turn implies that  $|E(H)|\le 10$. On the other hand $|E(H)|\ge 10$, thus $|E(H)|=10$ and so the inequality in (\ref{upper-bound}) hold as equality. In particular, $H$ is $5$-regular, implying the same for the other color. Thus we may assume that $K$ is partitioned into a red and a blue hypergraph $H_r,H_b$ both $5$-regular.

Let $v$ be an arbitrary vertex of $H$ and consider the hypergraphs induced by $H_r,H_b$ on $V-v=[5]$. By Lemma \ref{exbk4} both contain $\B(K_4-e)$, we show that some of them can be extended to $\cF$ by adding a suitable edge of $K$ containing $v$. It is convenient to represent $H_r, H_b$ with the graphs $G_r, G_b$ of their complements (with their inherited colorings).  Apart from color changes we have four possible cases.

\noindent
{\bf Case 1. } $G_r,G_b$ are complementary five cycles. Assume the edges of $G_r$ are $i,i+1$, thus  edges $i,i+1,i+2$ belong to $H_r$.  Every four element subset form a $\B(K_4-e)$ in $H_r$, for example the edges $$e_{12}=123,e_{45}=145,e_{25}=125,e_{24}=234,e_{35}=345$$ cover all pairs of $\{2,3,4,5\}$ except $\{3,4\}$. Thus we have a red $\cF$ unless $\{v,i,i+1\}$ are all forming blue edges in $K$. The same argument gives that $\{v,i,i+2\}$ must be red edges of $K$. But then there are many monochromatic $\cF$s, for example the assignment $$e_{12}=123,e_{45}=145,e_{25}=125,e_{24}=v24,e_{35}=v35, e_{45}=345$$ gives one on $\{2,3,4,5\}$.

\noindent
{\bf Case 2. } $G_r,G_b$ are complementary bulls.  Assume that the edges of $G_r$ are $12,23,34,24,45$, implying that the edges $345,123,145,125,135$ belong to of $H_r$ and the edges  $234,235,134,245,124$ belong to $H_b$.  Then we have two $\B(K_4-e)$s in $H_r$ on $\{1,2,3,5\}$. One of them with $$e_{12}=123,e_{13}=135,e_{15}=145,e_{25}=125,e_{35}=135$$ implying that $v23$ must be blue. The other is  $$e_{13}=135,e_{15}=145,e_{23}=123,e_{25}=125,e_{35}=135$$
implying that $v12$ must be blue. However, then vertex $2$ has degree at least six in $H_b$, contradiction.

\noindent
{\bf Case 3. } $G_r=K_4-e$, $G_b$ is its complement.  Assume that the edges of $G_r$ are $12,23,34,14,24$, then edges $125,135,145,235,345$ belong to $H_r$ and  edges $123,124,134,234,245$ belong to $H_b$. We have three $\B(K_4-e)$s in $H_b$ on vertices $\{1,2,3,4\}$. All use assignments $$e_{23}=234,e_{24}=245,e_{34}=134$$ and they can be extended to $\B(K_4-e)$ with $e_{13}=123,e_{14}=124$,  or $e_{12}=123,e_{14}=124$ or $e_{12}=124,e_{13}=123$,  respectively. Since we have no $\B(K_4)$ in $H_b$, the edges $12v,13v,14v$ are all red and vertex $1$ has degree at least six in $H_r$, contradiction.

\noindent
{\bf Case 4. } $G_r$ is a four-cycle with a pendant edge, $G_b$ is its complement.  Assume that the edges of $G_r$ are $12,23,34,45,25$, then edges $123,125,145,134,345$ belong to $H_r$ and  edges $124,135,234,235,245$ belong to $H_b$. Then we have two $\B(K_4-e)$s in $H_b$ on $\{1,3,4,5\}$. We can assign in both $$e_{14}=124,e_{34}=234,e_{35}=235,e_{45}=245$$ and complete it with either $e_{13}=135$ or $e_{15}=135$. Since we have no $\B(K_4)$ in $H_b$, $13v,15v$ are both red edges, consequently vertex $1$ has degree at least six in $H_r$, contradiction.

\eject

\end{document}